\newtheorem{assumption}{Assumption}[section]
\newcommand{\HEI}[1]{\bf Hybrid-EI}
\newtheorem{theorem}{Theorem}
\newtheorem{lemma}{Lemma}
\newtheorem{remark}{Remark}
\begin{document}
%

\title{{A Note on Stability of Event-Triggered Control Systems with Time Delays}}

%
%
%

\author{Kexue~Zhang~~~~
        ~Bahman~Gharesifard~~~~
        ~Elena~Braverman
\thanks{This work was supported by the Natural Sciences and Engineering Research Council of Canada (NSERC) under grants RGPIN-2020-03934 and PGPIN-2022-03144.}
\thanks{K. Zhang is with the Department of Mathematics and Statistics, Queen's University, Kingston, Ontario K7L 3N6, Canada (e-mail: kexue.zhang@queensu.ca).

B. Gharesifard is with the Department of Electrical and Computer Engineering, University of California, Los Angeles, CA 90095, USA (e-mail: gharesifard@ucla.edu).

E. Braverman is with the Department
of Mathematics and Statistics, University of Calgary, Calgary, Alberta T2N 1N4, Canada (e-mail: maelena@ucalgary.ca).}
}

\maketitle
%
\begin{abstract}
This note studies stability of event-triggered control systems with the event-triggered control algorithm proposed in~\cite{KZ-BG-EB:2022}. { We construct a novel Halanay-type inequality, which is used to show that sufficient conditions of the main results in~\cite{KZ-BG-EB:2022} ensure stability of the event-triggered control systems that was missing in~\cite{KZ-BG-EB:2022}. It is also shown that a positive parameter in the proposed event-triggering condition in~\cite{KZ-BG-EB:2022} can be freely selected to exclude Zeno behavior from the event-triggered control system. An illustrative example is investigated to demonstrate the theoretical results of this study with numerical simulations. }
\end{abstract}
%
\begin{IEEEkeywords}
Event-triggered control, nonlinear system, time delay, stability
\end{IEEEkeywords}

\section{Introduction}\label{Sec1}

In~\cite{KZ-BG-EB:2022}, an event-triggered control algorithm was proposed for nonlinear time-delay systems. { In the designed event-triggering condition, the exponential function $a e^{-b(t-t_0)}$ with tunable parameters $a$ and $b$} plays a vital role in ensuring boundedness and attractivity of the system states while excluding Zeno behavior, a phenomenon that the control updates are triggered infinitely many times over a finite time period. However, stability criterion with the proposed event-triggered control algorithm was not established in~\cite{KZ-BG-EB:2022} {when $a>0$}. 

In this note, we will show that event-triggered control systems with positive $a$ in~\cite{KZ-BG-EB:2022} actually are stable if the sufficient conditions in Theorem~2 (or Theorem~3) of~\cite{KZ-BG-EB:2022} are satisfied. { We will also prove that the positive parameter $b$ can be chosen arbitrarily, whereas the results in~\cite{KZ-BG-EB:2022} require the positive $b$ to be upper bounded, in order to exclude Zeno behavior from the control system.

The rest of this note is organized as follows. In Section~\ref{Sec1}, we rephrase the event-triggered control problem for general nonlinear time-delay systems considered in~\cite{KZ-BG-EB:2022}. To show stability of the event-triggered control systems, a new Halanay-type inequality is introduced in Section~\ref{Sec3}. Main results are presented in Section~\ref{Sec4} with some remarks to further elaborate the improvements achieved in this study. We illustrate the main results by a numerical example in Section~\ref{Sec5}, and finally draw conclusions in Section~\ref{Sec6}. }

{
\section{Problem Formulation}\label{Sec2}
To make this note self-contained, we adopt the notations from~\cite{KZ-BG-EB:2022} and briefly introduce the event-triggered control problem formulated in~\cite{KZ-BG-EB:2022} in this section.

Denote by $\mathbb{N}$ the set of positive integers, $\mathbb{R}$ the set of real numbers, $\mathbb{R}^+$ the set of nonnegative reals, and $\mathbb{R}^n$ the $n$-dimensional real space equipped with the Euclidean norm denoted by $\|\cdot\|$. For $a,b\in \mathbb{R}$ with $b>a$, we define
\begin{align*}
\mathcal{PC}([a,b],\mathbb{R}^n)=& \{ \phi:[a,b]\rightarrow\mathbb{R}^n \mid \phi \textrm{ is piecewise} \textrm{  right-}\cr
& \textrm{continuous} \}\cr
\mathcal{PC}([a,\infty),\mathbb{R}^n) =&  \{\phi:[a,\infty)\rightarrow\mathbb{R}^n \mid \phi|_{[a,c]}\in \mathcal{PC}([a,c],\mathbb{R}^n) \cr
 & \textrm{ for all } c>a \}
\end{align*}
where $\phi|_{[a,c]}$ is a restriction of $\phi$ on interval $[a,c]$. Let $\mathcal{C}(J,\mathbb{R}^n)$ denote the set of continuous functions from interval $J$ to $\mathbb{R}^n$. Given $\tau>0$, the linear space $\mathcal{C}([-\tau,0],\mathbb{R}^n)$ is equipped with the supremum norm $\|\phi\|_{\tau}:=\sup_{s\in[-\tau,0]}\|\phi(s)\|$ for $\phi\in \mathcal{C}([-\tau,0],\mathbb{R}^n)$. A function $\alpha:\mathbb{R}^+\rightarrow\mathbb{R}$ is said to be of class $\mathcal{K}$ and we write $\alpha\in\mathcal{K}$, if $\alpha$ is continuous, strictly increasing, and satisfies $\alpha(0)=0$. If $\alpha\in\mathcal{K}$ and also $\alpha(s)\rightarrow\infty$ as $s\rightarrow\infty$, we say that $\alpha$ is of class $\mathcal{K}_{\infty}$ and we write $\alpha\in\mathcal{K}_{\infty}$. A continuous function $\beta:\mathbb{R}^+\times\mathbb{R}^+ \rightarrow\mathbb{R}^+$ is said to be of class $\mathcal{KL}$ and we write $\beta\in \mathcal{KL}$, if the function $\beta(\cdot,t)$ is of class $\mathcal{K}$ for each fixed $t\in\mathbb{R}^+$, and the function $\beta(s,\cdot)$ is decreasing and $\beta(s,t)\rightarrow 0$ as $t\rightarrow \infty$ for each fixed $s\in \mathbb{R}^+$.

Next, we recall the event-triggered control problem in~\cite{KZ-BG-EB:2022}. Consider the following sampled-data control system
\begin{eqnarray}\label{ETC.system}
\left\{\begin{array}{ll}
\dot{x}(t)=f(t,x_t,u(t)) \cr
u(t)=k(x(t_i)),{~t\in[t_{i},t_{i+1})}\cr
x_{t_0}=\varphi
\end{array}\right.
\end{eqnarray}
where $x\in\mathbb{R}^n$ is the system state; control input $u\in\mathbb{R}^m$ is the state feedback control regulated by the feedback control law $k:\mathbb{R}^n\mapsto \mathbb{R}^m$ satisfying $k(0)=0$; the sampling time sequence $\{t_i\}_{i\in\mathbb{N}}$ is to be determined by a certain triggering condition from~\cite{KZ-BG-EB:2022} which will be introduced later; $\varphi\in\mathcal{C}([-\tau,0],\mathbb{R}^n)$ represents the initial function; $f:\mathbb{R}^+\times\mathcal{C}([-\tau,0],\mathbb{R}^n)\times\mathbb{R}^m\mapsto \mathbb{R}^n$ satisfies $f(t,0,0)=0$ for all $t\in\mathbb{R}^+$, and hence system~\eqref{ETC.system} admits the zero solution; given a time $t$, the function $x_{t}$ is defined as $x_{t}(s):=x(t+s)$ for $s\in[-\tau,0]$, and $\tau>0$ represents the maximum involved delay in the system. 

Let us denote the sampling error by
\begin{equation}\label{error}
\epsilon(t)=x(t_i)-x(t), \textrm{~for~} t\in[t_i,t_{i+1}).
\end{equation}
Then system~\eqref{ETC.system} can be written as the following closed-loop system
\begin{eqnarray}\label{Closedloop}
\left\{\begin{array}{ll}
\dot{x}(t)=f(t,x_t,k(x+\epsilon)) \cr
x_{t_0}=\varphi
\end{array}\right.
\end{eqnarray}

To introduce the triggering condition for determining the time sequence~$\{t_i\}_{i\in\mathbb{N}}$, we recall some concepts related to the Lyapunov functional candidate for time-delay systems. A function $V:\mathbb{R}^+\times\mathbb{R}^n\mapsto \mathbb{R}^+$ is said to be of class $\mathcal{V}_0$ and we write $V\in\mathcal{V}_0$, if, for each $x\in\mathcal{C}(\mathbb{R}^+,\mathbb{R}^n)$, the composite function $t\mapsto V(t,x(t))$ is continuous. A functional $V:\mathbb{R}^+\times \mathcal{C}([-\tau,0],\mathbb{R}^n)\mapsto\mathbb{R}^+$ is said to be of class $\mathcal{V}^*_0$ and we write $V\in \mathcal{V}^*_0$, if, for each function $x\in\mathcal{C}([-\tau,\infty),\mathbb{R}^n)$, {the composite function $t\mapsto V(t,x_t)$ is continuous in $t$ for all $t\geq 0$}, and $V$ is locally Lipschitz in its second argument. Given an input $u\in \mathcal{PC}([t_0,\infty),\mathbb{R}^m)$, we define the upper right-hand Dini derivative of the Lyapunov functional candidate $V(t,x_t)$ with respect to system \eqref{ETC.system}:
\[
\mathrm{D}^+V(t,\phi)=\limsup_{\varepsilon\rightarrow 0^+}\frac{V\left(t+\varepsilon,x_{t+\varepsilon}(t,\phi)\right)-V(t,\phi)}{\varepsilon}
\]
where $x(t,\phi)$ denotes the solution to \eqref{ETC.system} satisfying $x_t=\phi$.

Throughout this study, we assume the closed-loop system~\eqref{Closedloop} satisfies the following conditions.
\begin{assumption}\label{Assumption}
There exist functions $V_1\in \mathcal{V}_0$, $V_2\in \mathcal{V}^*_0$, $\alpha_1, \alpha_2, \alpha_3 \in \mathcal{K}_{\infty}$ and $\chi\in \mathcal{K}$, and constant $\mu>0$ such that 
\begin{itemize}
\item[(i)] $\alpha_1(\|\phi(0)\|)\leq V_1(t,\phi(0))\leq \alpha_2(\|\phi(0)\|)$ and $0\leq V_2(t,\phi)\leq \alpha_3(\|\phi\|_{\tau})$;

\item[(ii)] $V(t,\phi):=V_1(t,\phi(0))+V_2(t,\phi)$ satisfies
\[
\mathrm{D}^+V(t,\phi) \leq -\mu V(t,\phi) +\chi(\|\epsilon\|).
\]
\end{itemize}
\end{assumption}

Now we are in the position to introduce the triggering condition in~\cite{KZ-BG-EB:2022} for system~\eqref{ETC.system}. To enforce $\epsilon$ to satisfy the condition
\begin{equation}\label{enforce}
\chi(\|\epsilon\|)\leq \sigma \alpha_1(\|x\|) + \chi\left(a e^{-b(t-t_0)}\right)
\end{equation}
where $\sigma\geq 0$, $a\geq 0$, and $b>0$ are constants, we update the control input $u$ when the following triggering condition is satisfied
\begin{equation}\label{trigger}
\chi(\|\epsilon\|) = \sigma \alpha_1(\|x\|) + \chi\left(a e^{-b(t-t_0)}\right).
\end{equation}
When condition~\eqref{trigger} holds, we say an event occurs and then a control update is executed. Therefore, the sampling times $\{t_i\}_{i\in\mathbb{N}}$ are determined as follows:
\begin{equation}\label{event.time}
t_{i+1}=\inf\left\{t\geq t_i \mid \chi(\|\epsilon\|) = \sigma \alpha_1(\|x\|)+ \chi\left(a e^{-b(t-t_0)}\right)\right\},
\end{equation}
which are also called event times. Since the event times are implicitly determined by the triggering condition~\eqref{trigger}, it is essential to exclude Zeno behavior, a phenomenon that infinitely many events happen over a finite time interval, from the closed-loop system.

It has been shown in~\cite{XL-KZ:2020} that Assumption~\ref{Assumption} implies the closed-loop system~\eqref{Closedloop} is input-to-state stable with respect to the sampling error $\epsilon$, and the state feedback control $u(t)=k(x(t))$ for $t\geq t_0$ renders the following closed-loop system
\begin{eqnarray}\label{Closedloop0}
\left\{\begin{array}{ll}
\dot{x}(t)=f(t,x_t,k(x(t))) \cr
x_{t_0}=\varphi
\end{array}\right.
\end{eqnarray}
globally asymptotically stable. The objective of this study is to show that when $a>0$, the sufficient conditions provided in~\cite{KZ-BG-EB:2022} ensures global asymptotic stability of system~\eqref{Closedloop} with the event times determined by~\eqref{event.time}, rather than just uniform boundedness and global attractivity as proved in~\cite{KZ-BG-EB:2022}, and also to show that the closed-loop system~\eqref{Closedloop} does not exhibit Zeno behavior for any positive $b$, while the results in~\cite{KZ-BG-EB:2022} require $b<\mu-\sigma$ to rule out Zeno behavior.

To show stability of the closed-loop system, we will introduce a novel Halanay-type inequality in the next section.
}

\section{Halanay-Type Inequality}\label{Sec3}

For a continuous function $g:\mathbb{R}\rightarrow\mathbb{R}$, the Dini-derivatives $\mathrm{D}^+g(t)$ and $\mathrm{D}_-g(t)$ are defined as follows:
\[
\mathrm{D}^+g(t)=\limsup_{\varepsilon\rightarrow 0^+} \frac{g(t+\varepsilon)-g(t)}{\varepsilon}
\]
and
\[
\mathrm{D}_-g(t)=\liminf_{\varepsilon\rightarrow 0^-} \frac{g(t+\varepsilon)-g(t)}{\varepsilon}.
\]

The following lemma from~\cite{CTHB-EB:2005} provides a relationship between $\textrm{D}^+g(t)$ and $\textrm{D}_-g(t)$, which will be used to construct our Halanay-type inequality.

\begin{lemma}\label{lemma}
Let $p$ and $q$ be continuous functions with $\mathrm{D}^+p(t) \leq q(t)$ for $t$ in some open interval $\mathcal{I}$. Then $\mathrm{D}_-p(t)\leq q(t)$ for $t\in\mathcal{I}$.
\end{lemma}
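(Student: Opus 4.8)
The plan is to reduce the statement to a standard monotonicity principle for the upper right Dini derivative and then transfer the resulting bound to the left-hand difference quotient. The key auxiliary fact I would establish first is the following: \emph{if $g$ is continuous on a closed interval $[\alpha,\beta]$ and $\mathrm{D}^+g(t)<0$ for every $t\in[\alpha,\beta)$, then $g(\tau)<g(\alpha)$ for all $\tau\in(\alpha,\beta]$, so that $g$ is strictly decreasing on $[\alpha,\beta]$.} I would prove this by a level-set argument: assuming $g(\beta)>g(\alpha)$, pick $c$ with $g(\alpha)<c<g(\beta)$, set $t^*=\sup\{t\in[\alpha,\beta]:g(t)\le c\}$, and use continuity of $g$ to conclude $g(t^*)=c$ while $g(t)>c$ for $t\in(t^*,\beta]$. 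Then every forward difference quotient at $t^*$ is positive, forcing $\mathrm{D}^+g(t^*)\ge 0$ and contradicting $\mathrm{D}^+g(t^*)<0$. The borderline case $g(\beta)=g(\alpha)$ reduces to the strict case, since $\mathrm{D}^+g(\alpha)<0$ already makes $g$ dip strictly below $g(\alpha)$ immediately to the right of $\alpha$.

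With this fact in hand I would argue pointwise. Fix $t_0\in\mathcal{I}$ and choose an arbitrary constant $c>q(t_0)$. By continuity of $q$ there is $\delta>0$ with $[t_0-\delta,t_0]\subset\mathcal{I}$ and $q(t)<c$ on this interval. Define the continuous function $\phi(t)=p(t)-ct$ and compute $\mathrm{D}^+\phi(t)=\mathrm{D}^+p(t)-c\le q(t)-c<0$ for $t\in[t_0-\delta,t_0)$. The monotonicity fact then shows $\phi$ is strictly decreasing on $[t_0-\delta,t_0]$, so $\phi(t_0-h)\ge\phi(t_0)$ for every $0<h\le\delta$.

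Rewriting $\phi(t_0-h)\ge\phi(t_0)$ yields $p(t_0)-p(t_0-h)\le ch$, i.e. $\tfrac{p(t_0)-p(t_0-h)}{h}\le c$. Substituting $\varepsilon=-h$ into the definition of $\mathrm{D}_-$ gives $\mathrm{D}_-p(t_0)=\liminf_{h\to0^+}\tfrac{p(t_0)-p(t_0-h)}{h}$, and taking the $\liminf$ as $h\to0^+$ of the inequality above produces $\mathrm{D}_-p(t_0)\le c$. Since $c>q(t_0)$ was arbitrary, letting $c\downarrow q(t_0)$ gives $\mathrm{D}_-p(t_0)\le q(t_0)$; as $t_0\in\mathcal{I}$ was arbitrary, the claim follows.

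The step I expect to be the main obstacle is the monotonicity fact, since it is the only place where continuity of $p$ is genuinely used and where a pointwise derivative bound is converted into global monotone behavior. The strictness of $\mathrm{D}^+\phi<0$, obtained for free by choosing $c>q(t_0)$ rather than $c=q(t_0)$, is exactly what lets the supremum argument close without an extra $\varepsilon$-perturbation; the remaining steps—rewriting $\mathrm{D}_-p$ as a forward quotient in $h$ and the harmless limit in $c$—are routine bookkeeping.
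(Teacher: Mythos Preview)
Your argument is correct and self-contained: the level-set proof of the monotonicity principle for $\mathrm{D}^+g<0$ is sound (note that $g(\beta)>c$ and continuity force $t^*<\beta$, so the contradiction indeed occurs at a point where the hypothesis applies), and the subsequent transfer to $\mathrm{D}_-p$ via the auxiliary function $\phi(t)=p(t)-ct$ and the limit $c\downarrow q(t_0)$ is clean. One minor remark: to conclude that $\phi$ is strictly decreasing on all of $[t_0-\delta,t_0]$ you implicitly apply your auxiliary fact on every subinterval $[\alpha,\beta]\subset[t_0-\delta,t_0]$, not just the full interval; this is harmless but worth making explicit.

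There is, however, nothing to compare against in the paper itself: the authors do not prove this lemma but simply quote it from Baker and Buckwar~\cite{CTHB-EB:2005}. Your proof therefore supplies what the paper omits. The approach you take---reduce to a strict Dini-derivative monotonicity lemma and then pass to the limit in an auxiliary slope parameter---is the standard route for results of this type and is essentially how the cited reference proceeds as well.
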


Next, we introduce a new Halanay-type inequality which allows us to bound the states of  the event-triggered control system with time delays.

\begin{lemma}\label{lemma.Halanay}
Let $g:[t_0-r,t_0+\Gamma)\rightarrow \mathbb{R}^+$ be a continuous function satisfying
\begin{equation}\label{Halanay}
\mathrm{D}^+g(t)\leq \gamma_1 g(t_0) +\gamma_2 \|g_t\|_r \textrm{~~~for~~~}t_0\leq t< t_0+\Gamma
\end{equation}
where $r$, $\Gamma$, $\gamma_1$, and $\gamma_2$ are positive constants. Then
\[
g(t)\leq \|g_{t_0}\|_r e^{\lambda(t-t_0)} \textrm{~~~for~~~}t_0\leq t< t_0+\Gamma
\]
 where $\lambda=\gamma_1+\gamma_2$.
\end{lemma}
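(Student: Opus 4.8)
The plan is to argue by contradiction against a slightly inflated exponential envelope, using Lemma~\ref{lemma} to transfer the hypothesis, which is stated through $\mathrm{D}^+$, into a statement about $\mathrm{D}_-$ that can be applied at a first-crossing point. Write $M := \|g_{t_0}\|_r$ and fix an arbitrary $\varepsilon > 0$. I would introduce the comparison function $w(t) := (M+\varepsilon)e^{\lambda(t-t_0)}$ for $t \geq t_0$, extended by the constant $M+\varepsilon$ on $[t_0-r, t_0]$; note that $\lambda = \gamma_1 + \gamma_2 > 0$ makes $w$ nondecreasing on $[t_0,\infty)$ and that $g \leq M < M + \varepsilon = w$ on $[t_0 - r, t_0]$. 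The goal is to show $g(t) < w(t)$ on $[t_0, t_0 + \Gamma)$; letting $\varepsilon \to 0^+$ then yields the claimed bound.

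First I would set up the crossing-point argument. If $g < w$ fails somewhere on $[t_0, t_0 + \Gamma)$, let $t^*$ be the infimum of the set on which $g \geq w$, which is closed by continuity. Since $g(t_0) \leq M < w(t_0)$ and both functions are continuous, $t^* \in (t_0, t_0 + \Gamma)$, $g(t^*) = w(t^*)$, and $g < w$ on $[t_0, t^*)$. Because $g - w \leq 0$ to the left of $t^*$ and vanishes at $t^*$, reversing the one-sided difference quotient across $t-t^*<0$ gives $\mathrm{D}_- g(t^*) \geq w'(t^*) = \lambda w(t^*) = \lambda g(t^*)$.

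Next I would invoke Lemma~\ref{lemma}. The right-hand side $q(t) := \gamma_1 g(t_0) + \gamma_2 \|g_t\|_r$ is continuous, since the sliding supremum $t \mapsto \|g_t\|_r = \max_{u \in [t-r,\, t]} g(u)$ is continuous whenever $g$ is; hence the hypothesis $\mathrm{D}^+ g \leq q$ on $(t_0, t_0 + \Gamma)$ combined with Lemma~\ref{lemma} yields $\mathrm{D}_- g(t^*) \leq q(t^*)$, so that $\lambda g(t^*) \leq \gamma_1 g(t_0) + \gamma_2 \|g_{t^*}\|_r$. I would then bound the delayed term: every argument $u \in [t^*-r,\, t^*]$ lies in $[t_0 - r, t^*]$, where $g \leq w \leq w(t^*) = g(t^*)$ (using that $w$ is nondecreasing and that $g \leq M \leq g(t^*)$ holds on $[t_0 - r, t_0]$), and therefore $\|g_{t^*}\|_r \leq g(t^*)$.

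The main obstacle, and the crux of the argument, is extracting a \emph{strict} inequality, since a naive substitution returns only $\lambda g(t^*) \leq \lambda g(t^*)$. The strictness must come from the frozen constant $g(t_0)$ appearing in \eqref{Halanay}: at the crossing point $g(t^*) = w(t^*) \geq M + \varepsilon > M \geq g(t_0)$, so $\gamma_1 g(t_0) < \gamma_1 g(t^*)$ (here $\gamma_1 > 0$ is essential), whence
\[
\lambda g(t^*) \leq \gamma_1 g(t_0) + \gamma_2 \|g_{t^*}\|_r < \gamma_1 g(t^*) + \gamma_2 g(t^*) = \lambda g(t^*),
\]
a contradiction because $g(t^*) \geq M + \varepsilon > 0$. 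Hence $g < w$ throughout $[t_0, t_0 + \Gamma)$, and the result follows upon letting $\varepsilon \to 0^+$.
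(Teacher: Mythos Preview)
Your proof is correct and follows essentially the same route as the paper: inflate the exponential envelope, locate the first crossing point, use Lemma~\ref{lemma} to pass from $\mathrm{D}^+$ to $\mathrm{D}_-$, and extract the strict inequality from the frozen term $g(t_0)$. The only cosmetic difference is that you inflate additively by $+\varepsilon$ whereas the paper inflates multiplicatively by a factor $K>1$; your additive choice even handles the degenerate case $\|g_{t_0}\|_r=0$ a little more cleanly.
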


\begin{proof}
Define
\begin{eqnarray*}
w(t)=\left\{\begin{array}{ll}
\|g_{t_0}\|_r e^{\lambda(t-t_0)}, &\textrm{~~if~~} t_0<t<t_0+\Gamma \cr
\|g_{t_0}\|_r,  &\textrm{~~if~~} t_0-r\leq t\leq t_0 
\end{array}\right.
\end{eqnarray*}
and let $K>1$ be an arbitrary constant. Then, for $t\in [t_0-r,t_0]$, we have
\begin{equation}\label{lessthant0}
g(t)\leq \|g_{t_0}\|_r = w(t) <K w(t),
\end{equation} 
that is, $g(t)<Kw(t)$ for  $t\in [t_0-r,t_0]$.

Next, we use a contradiction argument to show that $g(t)<Kw(t)$ for  $t\in (t_0,t_0+\Gamma)$. Suppose there exits some $t\in (t_0,t_0+\Gamma)$ such that $g(t)\geq Kw(t)$, then we define
\[
\bar{t}=\inf\left\{t\in (t_0,t_0+\Gamma) \mid g(t)\geq Kw(t)\right\}.
\]
From the continuities of $g$ and $w$, we have 
\begin{equation}\label{property1}
g(t)<Kw(t) \textrm{~~for~~} t_0<t<\bar{t}
\end{equation}
and 
\begin{equation}\label{property2}
g(\bar{t})=Kw(\bar{t}).
\end{equation}

By~\eqref{property1} and~\eqref{property2}, we conclude that
\[
\frac{g(\bar{t}+\varepsilon)-g(\bar{t})}{\varepsilon}> \frac{Kw(\bar{t}+\varepsilon)-Kw(\bar{t})}{\varepsilon}
\]
for $\varepsilon<0$ close to $0$. Hence, 
\begin{equation}\label{contradiction}
\mathrm{D}_-g(\bar{t})\geq K\dot{w}(\bar{t}).
\end{equation}

On the other hand, by Lemma~\ref{lemma} and~\eqref{Halanay}, we get
\begin{align*}
\mathrm{D}_-g(\bar{t}) &\leq \gamma_1 g(t_0) +\gamma_2 \|g_{\bar{t}}\|_r \cr
              &<  \gamma_1 Kw(t_0) +\gamma_2 K\|w_{\bar{t}}\|_r \cr
              &< (\gamma_1 + \gamma_2)K w(\bar{t}) \cr
              &= \lambda Kw(\bar{t}) \cr
              &=K\dot{w}(\bar{t})
\end{align*}
where we used~\eqref{lessthant0},~\eqref{property1},~\eqref{property2}, and the definition of $w$ in the last two inequalities. This is a contradiction to~\eqref{contradiction}. Therefore, we conclude that $g(t)<Kw(t)$ for  $t\in (t_0,t_0+\Gamma)$.

Since $K>1$ is arbitrary, we let $K\rightarrow 1$ and then $g(t)\leq w(t)$ for  $t\in [t_0,t_0+\Gamma)$, that is, the proof is completed.

\end{proof}

Discussions on this lemma will be provided in Remark~\ref{remark2} with the main results in the following section.

\section{Main Results}\label{Sec4}

{
Now we are ready to introduce the main results of this study.

\begin{theorem}\label{Theorem}
Suppose that Assumption \ref{Assumption} holds with $V_1\in \mathcal{V}_0$, $V_2\in \mathcal{V}^*_0$, $\alpha_1, \alpha_2, \alpha_3 \in \mathcal{K}_{\infty}$ and $\chi\in \mathcal{K}$, and constant $\mu>0$. The event times $\{t_i\}_{i\in\mathbb{N}}$ are defined by~\eqref{event.time} with $0\leq \sigma<\mu$, $a> 0$, and $b>0$. We further assume that
\begin{itemize}
\item[(iii)] $\alpha^{-1}_1$, $\chi$, and $k$ are locally Lipschitz, where $\alpha^{-1}_1$ denotes the inverse of $\alpha_1$;

\item[(iv)] $f$ is locally Lipschitz in its second and third arguments, respectively.

\end{itemize}
Then the closed-loop system~\eqref{Closedloop} is globally asymptotically stable and does not exhibit Zeno behavior.
\end{theorem}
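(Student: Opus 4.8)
The plan is to establish the two assertions---global asymptotic stability and absence of Zeno behavior---one at a time, using Lemma~\ref{lemma.Halanay} as the engine for the stability estimate. I would begin by converting Assumption~\ref{Assumption}(ii) together with the enforcement condition~\eqref{enforce} into a scalar inequality for $V(t,x_t)=V_1(t,x(t))+V_2(t,x_t)$. Substituting~\eqref{enforce} into the Dini-derivative bound and using $\alpha_1(\|x(t)\|)\le V_1(t,x(t))\le V(t,x_t)$ gives
\[
\mathrm{D}^+V(t,x_t)\le -(\mu-\sigma)\,V(t,x_t)+\chi\!\left(ae^{-b(t-t_0)}\right),\qquad \mu-\sigma>0,
\]
and a comparison argument yields
\[
V(t,x_t)\le e^{-(\mu-\sigma)(t-t_0)}V(t_0,\varphi)+\int_{t_0}^{t}e^{-(\mu-\sigma)(t-s)}\chi\!\left(ae^{-b(s-t_0)}\right)ds .
\]
Since $\chi$ is locally Lipschitz by (iii) and the forcing decays, the convolution integral is uniformly bounded and tends to $0$; with $\alpha_1(\|x(t)\|)\le V(t,x_t)$ this recovers the uniform boundedness and global attractivity already obtained in~\cite{KZ-BG-EB:2022}, and in particular furnishes a global bound $\|x(t)\|\le R$.

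The genuinely new and hardest step is Lyapunov stability. The difficulty is that the forcing $\chi(ae^{-b(t-t_0)})$ above is fixed and does not vanish with $\|\varphi\|_\tau$, so the integrated Lyapunov bound delivers only boundedness, never stability. To defeat this I would estimate $\|x(t)\|$ \emph{directly} on each inter-event interval $[t_i,t_{i+1})$. On the region $\{\|x\|\le R\}$ supplied by the first step, (iii)--(iv) yield fixed Lipschitz constants $L_f,L_k$; since the control is frozen at $k(x(t_i))$ and $f(t,0,0)=0$,
\[
\mathrm{D}^+\|x(t)\|\le \|f(t,x_t,k(x(t_i)))\|\le L_fL_k\,\|x(t_i)\|+L_f\,\|x_t\|_{\tau},
\]
which is exactly inequality~\eqref{Halanay} with $g=\|x\|$, $r=\tau$, $\gamma_1=L_fL_k$, $\gamma_2=L_f$, and the interval's initial time taken to be $t_i$. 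Lemma~\ref{lemma.Halanay} then supplies the finite-horizon bound $\|x(t)\|\le \|x_{t_i}\|_{\tau}\,e^{\lambda(t-t_i)}$ with $\lambda=\gamma_1+\gamma_2$---a bound scaling linearly with the state entering the interval, precisely the feature the Lyapunov estimate lacks.

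I would then couple this with the triggering rule. Because $\epsilon(t_i)=0$ and $\dot\epsilon=-\dot x$, the same estimates give $\|\dot\epsilon(t)\|\le \lambda\,\|x_{t_i}\|_{\tau}\,e^{\lambda(t-t_i)}$, so integration bounds $\|\epsilon(t)\|$ by $\|x_{t_i}\|_\tau$ times an explicit increasing function of $t-t_i$; on the other hand~\eqref{trigger} forces $\|\epsilon(t_{i+1})\|\ge ae^{-b(t_{i+1}-t_0)}$. Equating these shows that a small entering state makes the inter-event time long but the attained error $\|\epsilon(t_{i+1})\|$, hence the state displacement, small, and an elementary manipulation gives a H\"older-type peak bound $\|x(t)\|\le C\,\|\varphi\|_\tau^{\,b/(\lambda+b)}$ on the first interval. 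Propagating this inductively across the intervals and splicing it with the attractivity from the first step yields a $\mathcal{KL}$ bound on $\|x(t)\|$, i.e.\ global asymptotic stability.

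For the exclusion of Zeno behavior I would reuse the growth bound on an arbitrary finite horizon $[t_0,T]$: boundedness of $x$ makes $\|\dot\epsilon\|\le M$ for some $M=M(T)$, so from $\epsilon(t_i)=0$ and the strictly positive threshold $\chi(ae^{-b(t-t_0)})\ge\chi(ae^{-b(T-t_0)})>0$ one obtains $t_{i+1}-t_i\ge \tfrac{a}{M}\,e^{-b(T-t_0)}>0$. This lower bound is positive for \emph{every} $b>0$, which is exactly the promised improvement over the restriction $b<\mu-\sigma$ in~\cite{KZ-BG-EB:2022}. Throughout, the crux is the stability step, where the non-vanishing forcing must be overcome by the interplay between the Halanay growth bound of Lemma~\ref{lemma.Halanay} and the decaying triggering threshold.
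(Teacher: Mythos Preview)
Your overall architecture is right and close to the paper's, but the stability step as you describe it has a gap. The inequality you extract from the triggering rule at $t_{i+1}$, namely $\|\epsilon(t_{i+1})\|\ge a e^{-b(t_{i+1}-t_0)}$, points the wrong way: combined with your integrated upper bound $\|\epsilon(t)\|\le \|x_{t_i}\|_\tau\bigl(e^{\lambda(t-t_i)}-1\bigr)$ it gives only a \emph{lower} bound on $t_{i+1}-t_i$ (useful for Zeno, useless for stability). A long first interval does not make the attained error small---the lower bound $a e^{-b(t_1-t_0)}$ being small says nothing about $\|\epsilon(t_1)\|$ itself---and even a small terminal error would not cap the peak $\sup_{[t_0,t_1)}\|x(t)\|\le \|\varphi\|_\tau e^{\lambda(t_1-t_0)}$, which can blow up with $t_1-t_0$. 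So the H\"older bound $\|x(t)\|\le C\|\varphi\|_\tau^{\,b/(\lambda+b)}$ on the first interval does not follow from the triggering threshold alone.

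The paper's route is simpler than your detour through the event times and uses precisely the ``splicing'' you mention only at the end. First concatenate the per-interval Halanay estimates by induction to obtain the single \emph{global} growing bound $\|x(t)\|\le \|\varphi\|_\tau e^{\lambda(t-t_0)}$ for all $t\ge t_0$. Then take the pointwise minimum with the \emph{decaying} bound $\alpha_1(\|x(t)\|)\le M e^{-\eta(t-t_0)}$ from the Lyapunov step, where $\eta=\min\{b,\mu-\sigma\}$ and $M=\alpha_2(\|\varphi(0)\|)+\alpha_3(\|\varphi\|_\tau)+\bar M$ with $\bar M$ independent of $\varphi$. The two curves cross at a unique $\hat t>t_0$, the common value there dominates $\|x(t)\|$ for every $t$, and $\hat t\to\infty$ as $\|\varphi\|_\tau\to 0$ forces that value to zero. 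In closed form this yields exactly $\|x(t)\|\lesssim \|\varphi\|_\tau^{\,\eta/(\lambda+\eta)}$, the H\"older estimate you were aiming for; the decaying factor has to come from the Lyapunov bound, not from the triggering floor.

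Your Zeno argument, by contrast, is correct and more transparent than the paper's. Once the attractivity step supplies a global bound $\|x(t)\|\le R$, $\|\dot\epsilon\|$ is bounded by a constant $M$ independent of $T$, and on any horizon $[t_0,T]$ the floor $\|\epsilon(t_{i+1})\|\ge a e^{-b(T-t_0)}>0$ immediately gives $t_{i+1}-t_i\ge (a/M)e^{-b(T-t_0)}$ for events before $T$, with no restriction on $b>0$. The paper instead reuses an inequality from~\cite{KZ-BG-EB:2022} and runs a contradiction argument through an auxiliary monotone function of $T_{i+1}=t_{i+1}-t_i$; both work, but yours is shorter.
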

}
\begin{proof} { Let
\begin{align}\label{eta}
\eta:=\left\{\begin{array}{ll}
\min\{b,\mu-\sigma\},~&\textrm{if}~ b\not=\mu-\sigma \cr
\xi,~&\textrm{if}~ b=\mu-\sigma
\end{array}\right.
\end{align}
where $\xi<b$ is a positive constant, and
\[
\bar{M}:=\left\{\begin{array}{ll}
\frac{aL}{|\mu-\sigma-b|},~&\textrm{if}~ b\not=\mu-\sigma \cr
\frac{aL}{|\mu-\sigma-\xi|},~&\textrm{if}~ b=\mu-\sigma
\end{array}\right.
\]
then from the proof of Theorem~2 in~\cite{KZ-BG-EB:2022}, we have
\begin{equation}\label{xbdd}
\|x(t)\|\leq \alpha^{-1}_1\big(M e^{-\eta(t-t_0)}\big)\leq \alpha^{-1}_1\big(M \big)
\end{equation}
for all $t\geq t_0$, where $M=\alpha_2(\|\varphi(0)\|)+\alpha_3(\|\varphi\|_{\tau})+\bar{M}.$ Global attractivity of the zero solution follows from~\eqref{xbdd}, that is, $\lim_{t\rightarrow \infty} \|x(t)\|=0$ for any initial condition $\varphi\in\mathcal{C}([-\tau,0],\mathbb{R}^n)$.

Next, we show stability of the closed-loop system~\eqref{Closedloop}. } From the system dynamics of~\eqref{ETC.system} on $[t_i,t_{i+1})$ and the Lipschitz conditions on $f$ and $k$, we have
\begin{align}\label{Halanayx}
\mathrm{D}^+\|x(t)\|\leq \|\dot{x}(t)\|&=\|f(t,x_t,k(x(t_i)))\|\cr
           &\leq L_2 \|x_t\|_{\tau} +L_3 \|x(t_i)\| 
\end{align}
which implies~\eqref{Halanay} holds on $[t_i,t_{i+1})$ with $r=\tau$, $g(t)=\|x(t)\|$, $\gamma_1=L_3$, and $\gamma_2=L_2$, where $L_2$ is the Lipschitz constant of the function $f(t,\cdot,u):\mathcal{C}([-\tau,0],\mathbb{R}^n)\mapsto \mathbb{R}^n$ on the compact set $\{\phi\in \mathcal{C}([-\tau,0] \mid \|\phi\|_{\tau}\leq R\}$, and $L_3$ is the Lipschitz constant of the composite function $f(t,\phi,k(\cdot)):\mathbb{R}^n \mapsto \mathbb{R}^n$ on the compact set $\{x\in \mathbb{R}^n \mid \|x\|\leq R\}$ with $R=\alpha^{-1}_1(M)$. 

We then conclude from Lemma~\ref{lemma.Halanay} that
\begin{equation}\label{boundx}
\|x(t)\| \leq \|x_{t_i}\|_{\tau} e^{\lambda (t-t_i)} \textrm{~~~for~~~}t_i\leq t< t_{i+1} \textrm{~and~} i\in \mathbb{Z}^+
\end{equation}
where $\lambda=L_2+L_3$ and $\mathbb{Z}^+$ denotes the set of non-negative integers. 

We next use mathematical induction to show that $\|x(t)\| \leq \|\varphi\|_{\tau} e^{\lambda (t-t_0)}$ for all $t\geq t_0$. We can see from~\eqref{boundx} that this statement holds for $t\in[t_0,t_1)$. Suppose $\|x(t)\| \leq \|\varphi\|_{\tau} e^{\lambda (t-t_0)}$ holds for $t\in [t_0,t_i)$, and we will show this inequality holds for $t\in[t_i,t_{i+1})$. By~\eqref{boundx}, we get
\begin{align*}
\|x(t)\| &\leq \|x_{t_i}\|_{\tau} e^{\lambda (t-t_i)} \cr
         &= e^{\lambda (t-t_i)} \sup_{s\in[-\tau,0]}\|x(t_i+s)\|\cr
         &\leq e^{\lambda (t-t_i)}  \|\varphi\|_{\tau} e^{\lambda (t_i-t_0)}\cr
         &= \|\varphi\|_{\tau} e^{\lambda (t-t_0)}  \textrm{~~~for~~~}t_i\leq t< t_{i+1},
\end{align*}
that is, $\|x(t)\| \leq \|\varphi\|_{\tau} e^{\lambda (t-t_0)}$ for all $t\in [t_0,t_{i+1})$. By induction, we conclude that
\begin{equation}\label{bound}
\|x(t)\| \leq \|\varphi\|_{\tau} e^{\lambda (t-t_0)} \textrm{~~~for~all~~}t\geq t_0.
\end{equation}
Then~\eqref{bound} and~\eqref{xbdd} imply
\begin{equation}\label{dbound1}
\alpha_1(\|x(t)\|)\leq \min\left\{ \alpha_1\left(\|\varphi\|_{\tau} e^{\lambda (t-t_0)}\right), M e^{-\eta(t-t_0)} \right\} \textrm{~~~for~all~~}t\geq t_0.
\end{equation}
Let $\delta_1=\inf\{s\geq 0: \alpha_2(s)+\alpha_3(s)\geq \bar{M}\}$, then $\|\varphi\|_{\tau}<\delta_1$ implies $M=\alpha_2(\|\varphi(0)\|)+\alpha_3(\|\varphi\|_{\tau})+\bar{M}<2\bar{M}$ and 
\begin{equation}\label{dbound2}
\alpha_1(\|x(t)\|)\leq \min\left\{ \alpha_1\left(\|\varphi\|_{\tau} e^{\lambda (t-t_0)}\right), 2\bar{M} e^{-\eta(t-t_0)} \right\} \textrm{~~~for~all~~}t\geq t_0.
\end{equation}
Since $\alpha_1$ is strictly increasing, we have that $\|\varphi\|_{\tau}<\delta_2$ implies $\alpha_1(\|\varphi\|_{\tau})< 2\bar{M}$ where $\delta_2=\alpha_1^{-1}(2\bar{M})$.

Next, we consider the initial function $\varphi$ satisfying $\|\varphi\|_{\tau}<\min\{\delta_1,\delta_2\}$. Then, there exists a unique $\hat{t}>t_0$ such that 
\[
\alpha_1\left(\|\varphi\|_{\tau}e^{\lambda(\hat{t}-t_0)}\right)= 2\bar{M} e^{-\eta(\hat{t}-t_0)},
\]
where we used the fact that both $\alpha_1\left(\|\varphi\|_{\tau}e^{\lambda(t-t_0)}\right)$ and $2\bar{M}e^{-\eta(t-t_0)}$ are strictly monotonic in $t$. Furthermore, for any $\varepsilon>0$, there exists a $\delta_3$, depending on $\varepsilon$, such that $\|\varphi\|_\tau< \delta_3$ implies
\[
\alpha_1\left(\|\varphi\|_{\tau}e^{\lambda(\hat{t}-t_0)}\right)= 2\bar{M} e^{-\eta(\hat{t}-t_0)}<\alpha_1(\varepsilon),
\]
that is, small enough $\|\varphi\|_\tau$ leads to large enough $\hat{t}$ so that $2\bar{M} e^{-\eta(\hat{t}-t_0)}< \alpha_1(\varepsilon)$. Note that $\bar{M}$ is independent of $\|\varphi\|_{\tau}$.

Now we can conclude from~\eqref{dbound2} that, for any $\varepsilon>0$, there exists a $\delta=\min\{\delta_1,\delta_2,\delta_3\}$ such that $\|\varphi\|_\tau< \delta$ implies
\[
\alpha_1(\|x(t)\|)\leq  2\bar{M} e^{-\eta(\hat{t}-t_0)}< \alpha_1(\varepsilon)
\]
for $t\geq t_0$, that is, $\|x(t)\|< \varepsilon$. The stability proof is completed.

{
Therefore, if $a>0$ in the triggering condition~\eqref{trigger}, we can conclude from stability and global attractivity of the zero solution that the closed-loop system~\eqref{Closedloop} is globally asymptotically stable.

Last but not least, we show that system~\eqref{Closedloop} does not exhibit Zeno behavior for any $b>0$. It has been shown in~\cite{KZ-BG-EB:2022} that the inter-event times $\{t_i-t_{i-1}\}_{i\in\mathbb{N}}$ are lower bounded by a positive quantity when $b<\mu-\sigma$, that is, system~\eqref{Closedloop} does not exhibit Zeno behavior. Hence, we will focus on the scenario of $b\geq \mu-\sigma$.

It follows from the proof of Theorem~2 in~\cite{KZ-BG-EB:2022} that
\begin{equation}\label{ineq1}
a e^{-b(t_{i+1}-t_0)} \leq \lambda_1 \left(e^{-\eta(t_i-t_0)} - e^{-\eta(t_{i+1}-t_0)} \right)  + \lambda_2 \left(t_{i+1}-t_i \right)e^{-\eta(t_i-t_0)}
\end{equation}
where $\lambda_1={L_1L_2Me^{\eta\tau}}/{\eta}>0$, $\lambda_2=L_1L_3 M>0$, and $L_1$ is the Lipschitz constant of $\alpha^{-1}_1$ on the interval $[0,M]$. Let $T_{i+1}=t_{i+1}-t_i$, then multiplying both sides of~\eqref{ineq1} by $e^{\eta(t_i-t_0)}$ yields
\begin{equation}\label{ineq2}
a e^{-bT_{i+1}} e^{(\eta-b)(t_{i}-t_0)} \leq \lambda_1 \left(1 - e^{-\eta T_{i+1}} \right)  + \lambda_2 T_{i+1}.
\end{equation}
Next we use contradiction argument to show that system~\eqref{Closedloop} is free of Zeno behavior. Suppose that there exists a $\bar{t}<\infty$ such that $\lim_{i\rightarrow \infty} t_i=\bar{t}$, that is, $t_i<\bar{t}$ for all $i\in\mathbb{N}$. The fact $b\geq \mu-\sigma$ and the definition of $\eta$ imply $\eta<b$. It then follows from~\eqref{ineq2} that 
\begin{equation}\label{ineq3}
a e^{-bT_{i+1}} e^{(\eta-b)(\bar{t}-t_0)} \leq \lambda_1 \left(1 - e^{-\eta T_{i+1}} \right)  + \lambda_2 T_{i+1}.
\end{equation}
Define a function
\[
g(T)=a e^{-bT} e^{(\eta-b)(\bar{t}-t_0)} - \lambda_1 \left(1 - e^{-\eta T} \right) - \lambda_2 T
\]
for $T\geq 0$. It can be observed that $g(0)=a e^{(\eta-b)(\bar{t}-t_0)}>0$, $\lim_{T\rightarrow \infty} g(T)=-\infty$, and
\[
g'(T)=-b a e^{-bT} e^{(\eta-b)(\bar{t}-t_0)} - \eta \lambda_1  e^{-\eta T}  - \lambda_2 <0.
\]
Thus, $g(T)$ is a strictly decreasing function, and then the equation $g(T)=0$ has a unique solution $T^*>0$. Moreover, $g(T)<0$ if $T>T^*$. On the other hand, it follows from~\eqref{ineq3} that $g(T_{i+1})<0$, and then $T_{i+1}=t_{i+1}-t_i >T^*$, that is, the inter-event times $\{t_{i+1}-t_i\}_{i\in\mathbb{N}}$ are bounded by $T^*>0$ from below.
This contradicts to the assumption $\lim_{i\rightarrow \infty} t_i=\bar{t}<\infty$. Therefore, system~\eqref{Closedloop} with~\eqref{event.time} does not exhibit Zeno behavior when $b\geq \mu-\sigma$, which completes the proof.
}
\end{proof}

\begin{remark}\label{remark1} 
It should be mentioned that the above proof does not reply on the Lipschitz condition on $\alpha_1^{-1}$. Nevertheless, if $\alpha^{-1}_1$ is locally Lipschitz, then $\hat{t}$ and $\delta$ can be obtained explicitly. To be more specific, suppose $\alpha^{-1}_1$ is locally Lipschitz, and we can show stability as follows. Combining~\eqref{bound} and~\eqref{xbdd} yields
\begin{equation}\label{dbound}
\|x(t)\|\leq \min\left\{ \|\varphi\|_{\tau} e^{\lambda (t-t_0)}, L_1M e^{-\eta(t-t_0)} \right\} \textrm{~~~for~all~~}t\geq t_0,
\end{equation}
where $L_1$ is the Lipschitz constant of $\alpha_1^{-1}$ on interval $[0,M]$. Consider $\|\varphi\|_{\tau}<\min\{\delta_1,\bar{\delta}_2\}$ with $\bar{\delta}_2=\alpha_1^{-1}(2L_1\bar{M})$, then $M<2\bar{M}$, $\|\varphi\|_{\tau}<2L_1\bar{M}$, and there exists a unique $\hat{t}>t_0$ such that 
\[
\|\varphi\|_{\tau} e^{\lambda (\hat{t}-t_0)}= 2L_1\bar{M} e^{-\eta(\hat{t}-t_0)},
\]
and then $\hat{t}$ can be derived as
\[
\hat{t}=\frac{\ln\left(\frac{2L_1\bar{M}}{\|\varphi\|_{\tau}} \right)}{\lambda+\eta} +t_0.
\]
By~\eqref{dbound} and the definition of $\hat{t}$, we have
\begin{align}\label{stability}
\|x(t)\|&<   2L_1\bar{M} e^{-\eta(\hat{t}-t_0)} \cr
        &=    2L_1\bar{M} \exp\left(\frac{-\eta}{\lambda+\eta} \ln\left( \frac{2L_1\bar{M}}{\|\varphi\|_{\tau}} \right)  \right) \cr
        & =   \|\varphi\|_{\tau}^{\frac{\eta}{\lambda+\eta}} \left(2L_1\bar{M}\right)^{\frac{\lambda}{\lambda+\eta}}
\end{align}
for all $t\geq t_0$. For any $\varepsilon>0$, let $\delta=\min\{\delta_1,\bar{\delta}_2,\delta_3\}$ with $\delta_3=\varepsilon^{(\lambda+\eta)/\eta} (2L_1\bar{M})^{-\lambda/\eta}$. For $\|\varphi\|_{\tau}<\delta$, we can derive from~\eqref{stability} that
\[
\|x(t)\|< \|\varphi\|_{\tau}^{\frac{\eta}{\lambda+\eta}} \left(2L_1\bar{M}\right)^{\frac{\lambda}{\lambda+\eta}}< \varepsilon
\]
for $t\geq t_0$, that is, the closed-loop system~\eqref{Closedloop} is stable. It can be seen that $\delta$ is given explicitly since $\delta_1$, $\bar{\delta}_2$, and $\delta_3$ are specifically defined. 
\end{remark}

\begin{remark}\label{remark2}
The upper bound $\alpha_1^{-1}(M e^{-\eta(t-t_0)})$ of the state norm in~\eqref{xbdd} guarantees attractivity of the closed-loop system. However, $M=\alpha_2(\|\varphi(0)\|)+\alpha_3(\|\varphi\|_{\tau}) + \bar{M}$ depends not only on the initial function $\varphi$ but also on parameter $a$ in $\bar{M}$. Since $\bar{M}$ is independent of the initial function $\varphi$, the stability criterion of the event-triggered control system couldn't be derived from this upper bound solely. The role of Lemma~\ref{lemma.Halanay} is to provide another bound in~\eqref{bound} for $\|x\|$. Combining these two bounds in~\eqref{dbound1} or~\eqref{dbound} allows stability analysis for the closed-loop system. Lemma~\ref{lemma.Halanay} is different from the existing Halanay-type inequalities (see, e.g.,~\cite{CTHB-EB:2005,PP:2022}) in the following sense. In the existing Halanay-type inequalities, the Dini derivative $\mathrm{D}^+g(t)$ is bounded by the sum of a function of $g(t)$ and a function of $\|g_t\|_r$, while in Lemma~\ref{lemma.Halanay} we bound $\mathrm{D}^+g(t)$ by a linear combination of $g(t_0)$ and $\|g_t\|_r$. This major difference in the dependence of $g$ at the initial time $t_0$ allows the estimation of the state bound over each interval $[t_i,t_{i+1})$ since the control input is unchanged during two consecutive event times. 
\end{remark}

{
For quadratic forms of $V_1$, the inverse of the $\mathcal{K}$ class function $\alpha_1$ in Assumption~\ref{Assumption}(i) is not locally Lipschitz in its domain. Hence, Theorem~\ref{Theorem} cannot be applied for such types of Lyapunov candidates. Nevertheless, the following result allows us to use quadratic forms of Lyapunov function as $V_1$ in the Lyapunov candidate $V$.
\begin{theorem}\label{Theorem2}
Suppose all the conditions of Theorem~\ref{Theorem} are satisfied, and the Lipschitz assumption on $\alpha^{-1}_1$ is replaced by the following condition:
\begin{itemize}
\item $\alpha^{-1}_1$ is Lipschitz on any closed and bounded sub-interval of $(0,\infty)$.
\end{itemize}
Then the closed-loop system~\eqref{Closedloop} with the event times determined by~\eqref{event.time} is globally asymptotically stable and does not exhibit Zeno behavior.
\end{theorem}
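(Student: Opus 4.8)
The plan is to exploit the fact that the only place where full local Lipschitz continuity of $\alpha_1^{-1}$ is actually used in the proof of Theorem~\ref{Theorem} is the Zeno analysis. As already noted in Remark~\ref{remark1}, the stability argument together with the attractivity bound \eqref{xbdd} rely only on $\alpha_1$ being of class $\mathcal{K}_{\infty}$ (continuous and strictly increasing) and on the two state bounds \eqref{bound} and \eqref{xbdd}; neither invokes any Lipschitz property of $\alpha_1^{-1}$. Since \eqref{bound} comes from Lemma~\ref{lemma.Halanay} using only the Lipschitz constants $L_2,L_3$ of $f$ and $k$, and \eqref{xbdd} is inherited from~\cite{KZ-BG-EB:2022}, both remain valid under the weakened hypothesis. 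Hence global asymptotic stability follows verbatim from the proof of Theorem~\ref{Theorem}, and the whole burden reduces to re-establishing the absence of Zeno behavior.

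The hard part is that, for quadratic $V_1$, the function $\alpha_1^{-1}$ fails to be Lipschitz at the origin, so the constant $L_1$ appearing in \eqref{ineq1} (the Lipschitz constant of $\alpha_1^{-1}$ on $[0,M]$) no longer exists and \eqref{ineq1} cannot be used over all of $[t_0,\infty)$. To circumvent this I would argue by contradiction exactly as in Theorem~\ref{Theorem}: since the case $b<\mu-\sigma$ is already settled in~\cite{KZ-BG-EB:2022}, assume $b\ge\mu-\sigma$ and suppose $\lim_{i\to\infty}t_i=\bar t<\infty$. The key observation is that all event times then lie in the compact interval $[t_0,\bar t]$, on which the argument $M e^{-\eta(s-t_0)}$ of $\alpha_1^{-1}$ ranges over $[m,M]$ with $m:=M e^{-\eta(\bar t-t_0)}>0$. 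This is a closed and bounded sub-interval of $(0,\infty)$, so by the weakened hypothesis $\alpha_1^{-1}$ is Lipschitz there; since $[m,M]$ is bounded away from the origin this yields a linear majorant $\alpha_1^{-1}(u)\le\hat L_1 u$ on $[m,M]$ for some $\hat L_1=\hat L_1(\bar t)$, whence $\|x(s)\|\le\alpha_1^{-1}\big(Me^{-\eta(s-t_0)}\big)\le\hat L_1 M e^{-\eta(s-t_0)}$ for $s\in[t_0,\bar t]$.

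With this local linear bound in hand I would reproduce the event-time integral estimate of~\cite{KZ-BG-EB:2022} verbatim, obtaining the analogues of \eqref{ineq1} and \eqref{ineq2} in which $L_1$ is replaced by the $\bar t$-dependent constant $\hat L_1$, giving constants $\tilde\lambda_1,\tilde\lambda_2>0$. Restricting to $t_i<\bar t$ and using $\eta<b$ gives the analogue of \eqref{ineq3}, after which the remainder is unchanged: the function $g(T)=a e^{-bT}e^{(\eta-b)(\bar t-t_0)}-\tilde\lambda_1(1-e^{-\eta T})-\tilde\lambda_2 T$ satisfies $g(0)>0$, $g(T)\to-\infty$ and $g'(T)<0$, hence has a unique positive root $T^*$, and $g(T_{i+1})\le 0$ forces $t_{i+1}-t_i\ge T^*>0$ for every $i$, contradicting $t_i\to\bar t$. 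The only genuine subtlety to watch is that $\hat L_1$, $\tilde\lambda_1$, $\tilde\lambda_2$ and $T^*$ all depend on $\bar t$; this is harmless because $\bar t$ is fixed once the contradiction hypothesis is made, and it is precisely this localization away from the origin that the hypothesis ``Lipschitz on any closed and bounded sub-interval of $(0,\infty)$'' is tailored to supply.
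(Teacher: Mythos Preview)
Your proposal is correct and follows essentially the same approach as the paper. The paper's own proof is extremely terse---it simply invokes Remark~\ref{remark1} for global asymptotic stability, cites~\cite{KZ-BG-EB:2022} for the case $b<\mu-\sigma$, and asserts that for $b\ge\mu-\sigma$ the Zeno exclusion is ``identical to the contradiction argument in the proof of Theorem~\ref{Theorem}''; you have made explicit the one point the paper glosses over, namely that once the contradiction hypothesis $t_i\to\bar t<\infty$ is in force, the arguments of $\alpha_1^{-1}$ are confined to a compact sub-interval $[m,M]\subset(0,\infty)$, which is precisely where the weakened Lipschitz hypothesis supplies the constant $\hat L_1$ needed to reproduce \eqref{ineq1}--\eqref{ineq3}.
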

\begin{proof}
From the discussion in Remark~\ref{remark1}, we can conclude that the proof of Theorem~\ref{Theorem} also implies global asymptotic stability of the closed-loop system under the conditions of Theorem~\ref{Theorem2}. When $b<\mu-\sigma$, the non-existence of Zeno behavior has been shown in~\cite{KZ-BG-EB:2022}. If $b\geq \mu-\sigma$, the exclusion of Zeno behavior is identical to the contradiction argument in the proof of Theorem~\ref{Theorem}. Therefore, the detailed proof is omitted.
\end{proof}

\begin{remark}\label{remakr3}
Compared with the results in~\cite{KZ-BG-EB:2022}, the improvements that Theorems~\ref{Theorem} and~\ref{Theorem2} have achieved for $a>0$ are as follows. Under the sufficient conditions established in~\cite{KZ-BG-EB:2022}, the closed-loop system~\eqref{Closedloop} with the sequence of event times determined by~\eqref{event.time} is guaranteed to be globally asymptotically stable, while the results in~\cite{KZ-BG-EB:2022} only showed the closed-loop system is uniformly bounded and globally attractive. Furthermore, the positive parameter $b$ can be chosen arbitrarily to rule out Zeno behavior from the closed-loop system, whereas the results in~\cite{KZ-BG-EB:2022} need $b<\mu-\sigma$ to exclude Zeno behavior. In summary, if Assumption~\ref{Assumption} and conditions (iii), (iv) of Theorem~\ref{Theorem} hold, then the tunable parameter $\sigma$ can be chosen so that $\sigma<\mu$, and both parameters $a$ and $b$ can be selected freely in the triggering condition~\eqref{trigger}. We refer the reader to~\cite{KZ-BG-EB:2022} for a detailed discussion about the effects of different parameter selections on the dynamic performance of the event-triggered control system.

\end{remark}
}

{
\section{An Illustrative Example}\label{Sec5}

Consider the following nonlinear time-delay control system
\begin{eqnarray}\label{example}
\left\{\begin{array}{ll}
\dot{x}_1(t)=x_2(t)+\omega_0 x^2_1(t) x_2(t)+u(t) \cr
\dot{x}_2(t)=-\omega_2 x_1(t) - \omega_1 x_2(t)- \omega_3 x_1(t-1) -\omega_2 x^3_1(t)
\end{array}\right.
\end{eqnarray}
where $x(t)=(x_1(t),x_2(t))^\top\in\mathbb{R}^2$, $\omega_i$ with $i=0,1,2,3$ are non-negative constants, and $u(t)=-x_1(t)$ is the feedback control. System~\eqref{example} has been widely used to model the machine tool chatter in the cutting process (see, e.g., \cite{YA-GS-EB-TS-ZMK:2020} and references therein). In this example, we consider the following parameters: $\omega_0=1$, $\omega_1=0.5$, $\omega_2=1$, and $\omega_3=0.3$.

To show system~\eqref{example} with the given feedback control is asymptotically stable and to design the event-triggered control implementation, we consider the Lyapunov functional $V(t)=V_1(t)+V_2(t)$ with
\[
V_1(t)=x^\top(t) x(t)
\]
and
\[
V_2(t)=\delta \int^t_{t-1} e^{-\zeta(t-s)} x^\top(s)x(s)\textrm{d}s
\]
where $\delta=0.4$ and $\zeta=0.28$. It can be seen that Assumption~\ref{Assumption}(i) holds with $\alpha_1(s)=s^2$, and its inverse function is not locally Lipschitz but Lipschitz on any closed and bounded sub-interval of $(0,\infty)$.

Under the sampled-data implementation, system~\eqref{example} can be written as the following closed-loop system
\begin{eqnarray}\label{example1}
\left\{\begin{array}{ll}
\dot{x}_1(t)=x_2(t)+\omega_0 x^2_1(t) x_2(t)-x_1(t) - \epsilon_1(t) \cr
\dot{x}_2(t)=-\omega_2 x_1(t) - \omega_1 x_2(t)- \omega_3 x_1(t-1) -\omega_2 x^3_1(t)
\end{array}\right.
\end{eqnarray}
where $\epsilon_1(t)=x_1(t_i)-x_1(t)$ for $t\in[t_i,t_{i+1})$, and the event times $\{t_i\}_{i\in\mathbb{N}}$ are to be determined by the event-triggering condition~\eqref{trigger}.

To verify condition~(iii) of Theorem~\ref{Theorem}, it yields from the dynamics of system~\eqref{example} that
\begin{align}
\dot{V}_1(t) =& 2x_1(t)\dot{x}_1(t) + 2x_2(t)\dot{x}_2(t) \cr
             =& (2-2\omega_2)x_1(t)x_2(t) +(2-2\omega_2) x^3_1(t)x_2(t) -2x^2_1(t)\cr
              & -2\omega_1 x^2_2(t) -2x_1(t)\epsilon_1(t) -2\omega_3 x_2(t)x_1(t-1)
\end{align}
and
\begin{align}
\dot{V}_2(t) =& -\zeta V_2(t) +\delta V_1(t) - \delta e^{-\zeta} (x^2_1(t-1)+ x^2_2(t-1)),
\end{align}
then,
\begin{align}
\dot{V}(t) \leq & (2-2\omega_2)x_1(t)x_2(t) +(2-2\omega_2) x^3_1(t)x_2(t) +\epsilon^2_1(t) \cr
                & -\delta e^{-\zeta} x^2_2(t-1) +(\omega_3-\delta e^{-\zeta})x^2_1(t-1) -\zeta V(t) \cr
                & +(-2+1+\zeta+\delta)x^2_1(t) +(-2\omega_1+\omega_3+\lambda+\delta)x^2_2(t) \cr
           \leq & -0.28 V(t) + \|\epsilon(t)\|^2
\end{align}
where $\epsilon(t)=(\epsilon_1(t),\epsilon_2(t))^\top=x(t_i)-x(t)$ for $t\in[t_i,t_{i+1})$. Therefore, condition~(iii) is satisfied with $\mu=0.28$ and $\chi(\|\epsilon\|)=\|\epsilon(t)\|^2$.

Based on the above analysis, the event-triggering condition~\eqref{trigger} can be written as follows:
\begin{eqnarray}\label{example.trigger}
\|\epsilon(t)\| = \sigma \|x(t)\|^2 +\left( a e^{-b(t-t_0)} \right)^2
\end{eqnarray}
where $\sigma< \mu$, and parameters $a$ and $b$ can be chosen arbitrarily. Theorem~\ref{Theorem2} concludes that system~\eqref{example1} with the triggering condition~\eqref{example.trigger} is globally asymptotically stable. Simulation results are shown in Fig.~\ref{fig1}, Fig.~\ref{fig2}, and Fig.~\ref{fig3} with initial condition $x(s)=(1,2)^\top$ for $s\in[-1,0]$, initial time $t_0=0$, and parameters $\sigma=0.16$, $a=1$, $b=0.14$. It should be noted that $b>\mu-\sigma=0.12$ in our simulations, and hence the results in~\cite{KZ-BG-EB:2022} are not applicable for these parameter selections.

\begin{figure}[!t]
\centering
\includegraphics[width=3.3in]{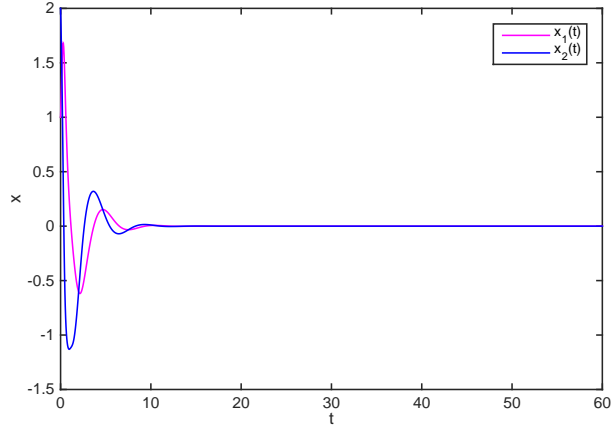}
\caption{System trajectories with the state feedback control $u(t)=-x_1(t)$. }
\label{fig1}
\end{figure}

\begin{figure}[!t]
\centering
\includegraphics[width=3.3in]{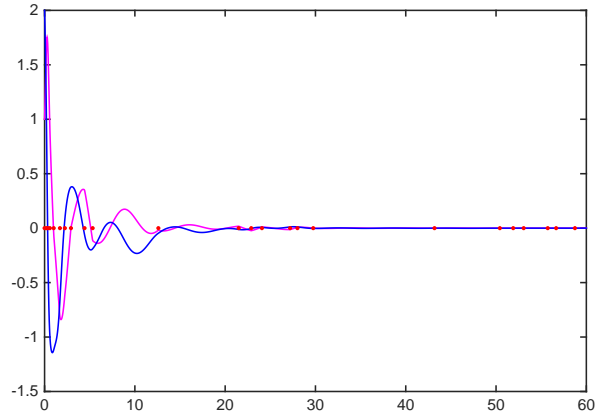}
\caption{System trajectories with the proposed event-triggered control mechanism. Red dots on the time axis indicate the event times.}
\label{fig2}
\end{figure}

\begin{figure}[!t]
\centering
\includegraphics[width=3.3in]{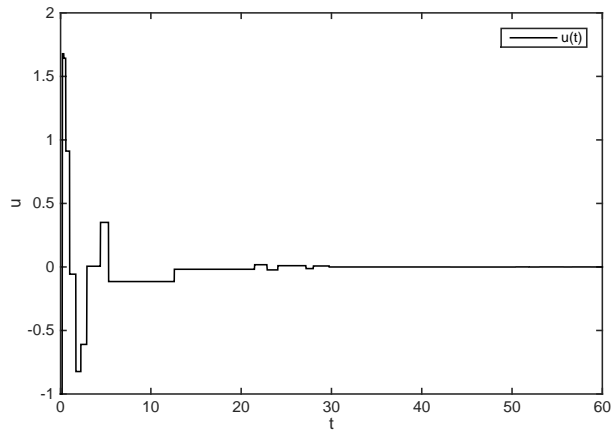}
\caption{Event-triggered control input under the triggering condition~\eqref{example.trigger}.}
\label{fig3}
\end{figure}

\section{Conclusions}\label{Sec6}
We have revisited the event-triggered control problem for time-delay systems considered in~\cite{KZ-BG-EB:2022}. It has been shown that under the sufficient conditions proposed in~\cite{KZ-BG-EB:2022} the event-triggered control system is globally asymptotically stable rather than just uniformly bounded and globally attractive. Moreover, our analysis has allowed us to arbitrarily choose a parameter in the event-triggering condition to both ensure global asymptotic stability and non-existence of Zeno behavior in the event-triggered time-delay control systems. A numerical example has been presented to verify the theoretical results.

}

\end{document}